\documentclass[pamm,a4paper,fleqn
,final
]{w-art}
\usepackage{times,cite,w-thm}
\usepackage[T1]{fontenc}
\usepackage[utf8]{inputenc}
\usepackage[english]{babel}
\usepackage{graphicx}
\usepackage{hyperref}

\usepackage{amssymb}
\usepackage{tikz}
\usepackage{stmaryrd}
\usetikzlibrary{patterns}
\usetikzlibrary{arrows.meta}


\usepackage[
  authormarkup=none]{changes}

\newcounter{ADD}
\newcounter{DEL}
\def\ADDin{\addtocounter{ADD}{1}}
\def\ADDout{\addtocounter{ADD}{-1}}
\def\DELin{\addtocounter{DEL}{1}}
\def\DELout{\addtocounter{DEL}{-1}}

\makeatletter
\newcommand{\ifdraft}[2]{}
\ifthenelse{\boolean{Changes@optiondraft}}{%
  \renewcommand{\ifdraft}[2]{#1}}{%
  \renewcommand{\ifdraft}[2]{#2}}
\newcommand{\ifADD}[2]{\ifnum\value{ADD}>0{#1}\else{#2}\fi}
\newcommand{\ifDEL}[2]{\ifnum\value{DEL}>0{#1}\else{#2}\fi}
\makeatother

\newcommand{\delmarkup}[1]{%
  \DELin%
  \textcolor{red!80!black}{\stkout{#1}}%
  \DELout%
}
\newcommand{\addmarkup}[1]{%
  \ADDin%
  \ifDEL{}{%
    \textcolor{green!50!black}{#1}}
  \ADDout%
}
\ifdraft{%
  \newenvironment{addedenv}{%
    \ADDin%
    \color{green!50!black}}
  {%
    \ADDout%
    \color{black}}
}
{
  
}
\ifdraft{
  \setdeletedmarkup{\delmarkup{#1}}
  \setaddedmarkup{\addmarkup{#1}}
}{}

\ifdraft{
  \newcommand{\deleteX}[1]{%
    \DELin%
    {\color{red!80!black}{\sout{#1}}}%
    \DELout}%
}{
  \newcommand{\deleteX}[1]{}
}

\ifdraft{}{%
  \renewcommand{\added}[2][]{#2}
}

\begin{document}

\TitleLanguage[EN]
\title{DoD Stabilization of linear hyperbolic PDEs on general cut-cell meshes}

\author{\firstname{Gunnar} \lastname{Birke}\inst{1,}%
\footnote{Corresponding author: e-mail \ElectronicMail{g_birk01@wwu.de}}} 
\address[\inst{1}]{\CountryCode[DE]Applied Mathematics Münster, Münster University}
\author{\firstname{Christian} \lastname{Engwer}\inst{1,}%
}
\author{\firstname{Sandra} \lastname{May}\inst{2,}%
}
\address[\inst{2}]{\CountryCode[SE]Department of Information Technology, Uppsala University}
\author{\firstname{Florian} \lastname{Streitbürger}\inst{3,}%
}
\address[\inst{3}]{\CountryCode[DE]Department of Mathematics, Dortmund University}
\AbstractLanguage[EN]
\begin{abstract}
Standard numerical methods for hyperbolic PDEs require for stability a CFL-condition which implies that the time step size depends on the size of the elements of the mesh. On cut-cell meshes, elements can become arbitrarily small and thus the time step size cannot take the size of small cut-cells into account but has to be chosen based on the background mesh elements.

A remedy for this is the so called DoD (domain of dependence) stabilization for which several favorable theoretical and numerical properties have been shown in one and two space dimensions~\cite{emns,ms}. Up to now the method is restricted to stabilization of cut-cells with exactly one inflow and one outflow face, i.e. triangular cut-cells with a no-flow face (see \cite{emns}).

We extend the DoD stabilization to cut-cells with multiple in- and outflow faces by properly considering the flow distribution inside the cut-cell.
We further prove $L^2$-stability for the semi-discrete formulation in space and present numerical results to validate the proposed extension.
\end{abstract}
\maketitle                   

\section{Introduction}
To avoid the mesh generation process of complex geometries, cut-cell methods are an attractive alternative. The general idea is to start with a simple, e.g. structured, background mesh and to cut out the desired geometry. This results in a mesh with unstructured polyhedral cells, called cut-cells. Cut-cells can have an arbitrary shape and can become arbitrarily small, causing the small cell problem. To use explicit time stepping schemes for solving hyperbolic conservation laws, the time step size would need to be chosen based on the smallest cut-cell in the grid to ensure stability, which is in general not feasible.

Developing solution approaches to the small cell problem in the context of discontinuous Galerkin (DG) schemes is a very recent research branch, including for example the work in \cite{kronbichler, giuliani, Pei_Kreiss}. 
In this contribution we focus on the domain of dependence (DoD) stabilization, which was introduced in \cite{emns} for the linear transport equation in one and two space dimensions and was extended to non-linear systems in one space dimension in \cite{ms}. It is based on a DG scheme in space to allow for higher-order approximations and possesses several desirable theoretical properties. Numerical results show the expected higher-order behavior in smooth flow and robustness around shocks.

Up to now, the DoD stabilization in two dimensions has only been used to stabilize small triangular cut-cells for linear advection parallel to a ramp \cite{emns,ICOSAHOM}. In this setup, the small stabilized cut-cells have exactly one inflow and one outflow face, which was exploited in the design of the stabilization. When moving to non-linear or coupled linear problems, this does not hold true anymore and one has to deal with multiple inflow and outflow faces.

In this work, we take the first step in that direction by considering the linear advection equation on a cut-cell mesh with arbitrary flow directions, resulting in triangular cut-cells having 2 inflow and 1 outflow neighbor or reverse. As this causes significant additional complications, we will only consider piecewise constant polynomials here. We will prove $L^2$-stability for the semi-discrete scheme and present numerical results to validate the new extension of the stabilization terms.

The outline of the paper is as follows: we will first describe the problem setup and then introduce the new extended stabilization. Then we will show the $L^2$-stability proof and conclude with numerical results.

\section{Problem setup}
Let $\Omega \subset \mathbb{R}^2$ be an open and connected domain. We consider linear hyperbolic systems of the form
\begin{subequations}
\label{eq:hyper-system}
\begin{align}
u_t + Au_x + Bu_y & = 0 && \text{in } \Omega \times (0, T), \\
\tau u & = g && \text{on } \partial \Omega \times (0, T), \\
u & = u_0 && \text{on } \Omega \times \{t = 0\},
\end{align}
\end{subequations}
where $u(t) \in \mathbb{R}^m$, and $A, \, B \in \mathbb{R}^{m \times m}$ constant, and $\tau$ is an appropriate boundary operator such that we only impose inflow boundary conditions on incoming waves (and not everywhere on $\partial \Omega$). We require that for any unit vector $n\added{=(n_1,n_2)^T} \in \mathbb{S}^1$ the matrices $C = (n_1 A + n_2 B)$ are symmetric and simultaneously diagonalizable over the reals, i.e. there is an orthogonal matrix $O \in \mathbb{R}^{m \times m}$ and diagonal matrices $\Lambda_n \in \text{Diag}(\mathbb{R}^{m \times m})$ such that
$n_1 A + n_2 B = O \Lambda_n O^T ~ \forall n \in \deleteX{\mathbb{R}^2}\added{\mathbb{S}^1}$.

In our numerical tests we will choose $\Omega = [0, 1]^2$ and discretize it by a structured grid $\widehat{\mathcal{M}_h}$. We then introduce an artificial cut, a straight line going through the square, starting at $(x_0, 0)$ and having an angle $\gamma$ relative to the $x$-axis. This creates an internal boundary with two subdomains which we will resolve by a cut-cell mesh $\mathcal{M}_h$. A sketch is contained in Fig. \ref{fig:cutted-square-geometry}. So far we have always tested with flow parallel to that cut. Here, we consider flow in various directions, keeping the cut fixed.

\begin{figure}
\centering
\begin{tikzpicture}[color=black,semithick,scale=1.0]
\begin{scope}
\node at (0.0,1.3) {\footnotesize${\widehat{\mathcal{M}}}_h$};
\draw[step=0.25cm,color=gray] (-1,-1) grid (1,1);
\draw[color=gray] (-1,-1) -- (1,-1) -- (1,1) -- (-1,1) -- cycle;
\node at (1.5,0.0) {\huge$\cap$};
\end{scope}
\begin{scope}[xshift=3.0cm]
\node at (0.0,1.3) {\footnotesize${\bar{\Omega}}$};
\draw[color=gray] (-1,-1) -- (1,-1) -- (1,1) -- (-1,1) -- cycle;
\draw[color=gray] 
(-0.7999, -1.1)
--
(-0.7999, -1.0) node[anchor=north]{\color{black!70}\footnotesize$(x_0, 0)$} -- (1.0, 0.5102);
\draw[color=gray, dashed] (0.3, -1.0) arc(0:44:1.0);
\node[color=black!70] at (-0.1,-0.7) {$\gamma$};
\node at (1.5,0.0) {\huge$=$};
\end{scope}
\begin{scope}[xshift=6.0cm]
\node at (0.0,1.3) {\footnotesize${\mathcal{M}_h}$};
\draw[step=0.25cm,color=gray] (-1,-1) grid (1,1);
\draw[color=gray] (-1,-1) -- (1,-1) -- (1,1) -- (-1,1) -- cycle;
\draw[color=gray] (-0.7999, -1.0) -- (1.0, 0.5102);
\end{scope}
\begin{scope}[xshift=10cm]
\draw[dashed,color=gray] (-2.9, 0.5) -- (-0.9,0.8);
\draw[dashed,color=gray] (-2.9, 0.25) -- (-0.9,-0.8);
\draw[color=gray] (-0.8,-0.8) -- (0.8,-0.8) -- (0.8,0.8) -- node[above]{\footnotesize\color{black}$E \in \widehat{\mathcal{M}}_h$} (-0.8,0.8) -- cycle;
\draw[color=gray] (-0.8,-0.5) -- (0.75,0.8);
\node at (-0.4,0.4) {\footnotesize $E_1$};
\node at (0.2,-0.4) {\footnotesize $E_2$};
\end{scope}
\end{tikzpicture}
\centering
\caption{Construction of the mesh: Out of the structured grid $\widehat{\mathcal{M}_h}$ on the domain $\Omega$ the mesh $\mathcal{M}_h$ is constructed by introducing cut-cells $E_1, E_2 \subset E \in \widehat{M_h}$ along the cut such that $\bar{E_1} \cup \bar{E_2} = \bar{E}$.}
\label{fig:cutted-square-geometry}
\end{figure}
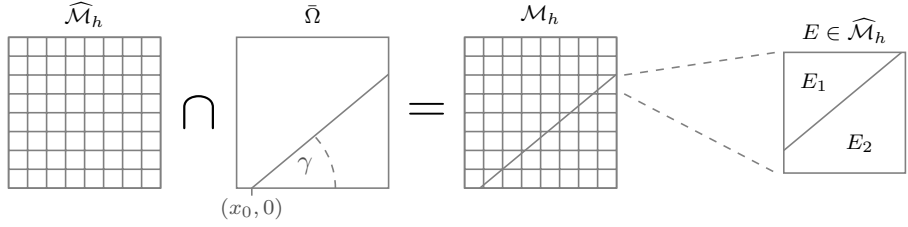

We define the sets of internal and external faces as
\[
\mathcal{F}^{\text{int}}_h =  \{ F = \partial E_1 \cap \partial E_2 | E_1, E_2 \in \mathcal{M}_h, \; E_1 \neq E_2, \; |F| > 0 \}, \quad
\mathcal{F}^{\text{ext}}_h =  \{ F = \partial E \cap \partial \Omega | E \in \mathcal{M}_h, \; |F| > 0 \},
\]
and the set of faces of an element $E \in \mathcal{M}_h$ by\added{ $\mathcal{F}_h^E = \{ F \in \mathcal{F}^{\text{int}}_h \cup \mathcal{F}^{\text{ext}}_h | F \subset \partial E \}$.}
\deleteX{\begin{displaymath}
\mathcal{F}_h^E = \{ F \in \mathcal{F}^{\text{int}}_h \cup \mathcal{F}^{\text{ext}}_h | F \subset \partial E \}.
\end{displaymath}}We choose a fixed local numbering on each of these sets and denote the neighbor element of $E$ corresponding to a face $F_i \in \mathcal{F}_h^E$ by $E_i$.
For the discretization in space we \deleteX{consider piecewise constant trial and test functions, which span the space}\added{choose the discrete function space}
\begin{displaymath}
\mathcal{V}^0_h(\mathcal{M}_h) = \{ v_h \in L^2(\Omega)^{\added{m}} \: | \: \added{(}v_h\added{)_i}{\vert_E} \in \mathcal{P}^0(E) \: \forall \added{(}E\added{, i)} \in \mathcal{M}_h \added{\times \{1, .., m \}} \}.
\end{displaymath}
For $v_h \in \mathcal{V}^0_h(\mathcal{M}_h)$ we denote by $v_h^E$ the value of $v_h$ on an element $E \in \mathcal{M}_h$. 

For interior faces $F \in \mathcal{F}^{\text{int}}_h$ we fix an orientation of the outer unit normal vector $n_F$ and denote the inner and outer element of $F$ by $E_1$ and $E_2$, respectively. 
We then define average and jump by
\begin{displaymath}
\textstyle
\left\lbrace\mskip-5mu\lbrace{u_h}\right\rbrace\mskip-5mu\rbrace:=\frac 1 2(u_h^{_{E_1}}+u_h^{_{E_2}}), \qquad
\left \llbracket{u_h}\right\rrbracket :=u_h^{_{E_1}}-u_h^{_{E_2}}.
\end{displaymath}
For exterior faces $F \in \mathcal{F}^{\text{ext}}_h$ we simply choose the unit outer normal \deleteX{and extend the definition of jump and average appropriately}\added{and denote by $u_h^{E_F}$ the solution on the cell that lies in the interior of the domain and contains face $F$}. 
We define the flux matrix on a face $F$ as
\begin{equation}
C_F = (n_F)_1 A + (n_F)_2 B = O \Lambda_F O^T,
\end{equation}
where $(n_F)_{1,2}$ denote the first and second component of the unit normal vector $n_F$ on face $F$.
Based on this, we define matrices which encode the flux directions as
\begin{displaymath}
C_F^+ = O \Lambda_F^+ O^T, \quad C_F^- = O \Lambda_F^- O^T
\quad\text{with}\quad
(\Lambda_F^+)_{i,i} = \max(0, (\Lambda_F)_{i,i})
\quad\text{and}\quad
(\Lambda_F^-)_{i,i} = \min(0,(\Lambda_F)_{i,i}).
\end{displaymath}
Note that $C_F = C_F^+ + C_F^-.$ We also introduce a generalization of the absolute value for such flux matrices by
$
|C_F| = C_F^+ - C_F^-.
$

The (unstabilized) upwind semi-discretization in space is then given as: Find $u_h(t) \in \mathcal{V}^0_h(\mathcal{M}_h)$ such that
\begin{equation}
(\partial_t u_h(t), v_h)_{L^2(\Omega)} + a^{\text{upw}}_h(u_h(t), v_h) + l_h(v_h) = 0 \quad \forall \: v_h \in \mathcal{V}^0_h(\mathcal{M}_h)
\end{equation}
with
\begin{align*}
a^{\text{upw}}_h(u_h, v_h) = & \sum_{F \in \deleteX{\Gamma^{\text{ext}}}\added{\mathcal{F}_h^{\text{ext}}}} \int_F \langle C_F^+ u_h\added{^{E_F}}, v_h\added{^{E_F}} \rangle ds + \sum_{F \in \deleteX{\Gamma^{\text{int}}}\added{\mathcal{F}_h^{\text{int}}}} \int_F \langle C_F \left\lbrace\mskip-5mu\lbrace{u_h}\right\rbrace\mskip-5mu\rbrace, \left \llbracket{v_h}\right\rrbracket \rangle + \frac{1}{2} \langle |C_F| \left \llbracket{u_h}\right\rrbracket, \left \llbracket{v_h}\right\rrbracket \rangle ds, \\
l_h(v_h) = & - \sum_{F \in \deleteX{\Gamma^{\text{ext}}}\added{\mathcal{F}_h^{\text{ext}}}} \int_F \langle C_F^- g, v_h\added{^{E_F}} \rangle ds.
\end{align*}
{Here, $\langle \cdot, \cdot \rangle$ denotes the scalar product in $\mathbb{R}^m$.} 
\added{We obtain $a^{\text{upw}}_h$ and $l_h$ by integration by parts, where the integral over internal edges leads to jump terms (second sum) and the boundary integral is split into outgoing waves (first sum) and incoming waves (right hand side).}
We then discretize in time using the explicit Euler scheme.

\section{Stabilization}

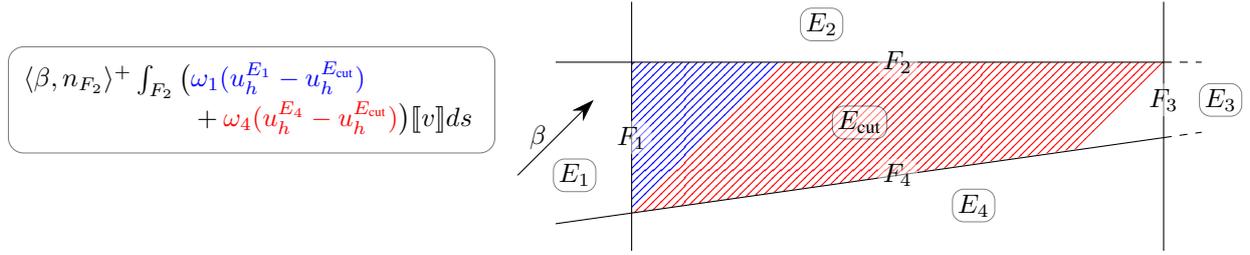
\begin{figure}[t]
\centering
\begin{tikzpicture}[whitebox/.style={%
        rounded corners=1.5mm,
        outer sep=0pt,
        inner sep=0.7mm,
        fill=white,
        opacity=0.5,
        text opacity=1},
        celltxt/.style={%
        rounded corners=1.5mm,
        outer sep=0pt,
        inner sep=0.7mm,
        draw, very thin,
        opacity=0.5,
        text opacity=1}
        ]
\fill[pattern=north east lines, pattern color=blue] (0.0, 2.0) -- (0.0, 0.0) -- (2.0, 2.0);
\fill[pattern=north east lines, pattern color=red] (0.0, 0.0) -- (2.0, 2.0) -- (7.0, 2.0) -- (5.85, 0.85);
\draw[black] (0,0) 
  -- node[whitebox]{$F_1$}
     node[celltxt,shift={(-.75,-.5)}]{$E_1$} 
     ++(0,2)
  -- node[whitebox]{$F_2$}
     node[celltxt,shift={(-1,.5)}]{$E_2$} 
     ++(7,0)
  -- node[whitebox]{$F_3$}
     node[celltxt,shift={(.75,0)}]{$E_3$} 
     ++(0,-1)
  -- node[whitebox]{$F_4$}
     node[celltxt,shift={(1,-.4)}]{$E_4$} 
     cycle;
\node[celltxt,whitebox] at (3, 1.2) {$E_{\text{cut}}$};
\draw (0, 2) -- (0, 2.8);
\draw (0, 2) -- (-1.0, 2);
\draw (0, 0) -- (0, -0.5);
\draw (0, 0) -- (-1.0, -0.142);
\draw (7, -0.5) -- (7, 1);
\draw (7, 2) -- (7, 2.8);
\draw[dashed] (7.0, 2.0) -- (7.5, 2.0);
\draw[dashed] (7.0, 1.0) -- (7.5, 1.071);
\draw[-{Stealth[length=3mm, width=2mm]}](-1.5, 0.5) -- node[left]{$\beta$}
++(1,1);
\node[
    rounded corners=2mm,
    outer sep=0pt,
    inner sep=2mm,
    draw=black!50
] at (-5, 1.5) {\parbox{6cm}{$\langle \beta, n_{F_2} \rangle^+ \int_{F_2} \big( \textcolor{blue}{\omega_1 (u_h^{E_1} - u_h^{E_{\text{cut}}})}\\
\phantom{\langle \beta, n_{F_2} \rangle^+ \int_{F_2}\big(}+\textcolor{red}{\omega_4 (u_h^{E_4} - u_h^{E_{\text{cut}}})}\big) \llbracket v \rrbracket ds$}};
\end{tikzpicture}
\caption{Domain of dependence extension illustrated on a four-sided cut-cell: We introduce a direct mass transport from cells $E_1$ and $E_4$ into $E_2$. The colored regions indicate the coupling between the faces, the corresponding parts of the stabilization term on $F_2$ are highlighted accordingly. From the graphic one can see that flow orientation and geometry play a central part in determining the right mass redistribution and the $\omega_i$ will have to be chosen accordingly. The face $F_3$ needs to be stabilized as well. Note that there should not be any coupling between between $F_2$ and $F_3$ as both are outflow faces.}
\label{fig:-dom-of-dep-ext}
\end{figure}

To deal with small cut-cells, an additional term $J_h^0 = \sum_{E_{\text{cut}} \in \mathcal{I}} J_h^{0, E_{\text{cut}}}$ with $\mathcal{I}$ being the set of small cut-cells that require stabilization is added to the space semi-discretization \added{(we will comment on our choice in the numerical results below)}. This results in the following scheme: Find $u_h(t) \in \mathcal{V}^0_h(\mathcal{M}_h)$ such that
\begin{equation}
\label{eq:semi-discrete-stabilization} (\partial_t u_h(t), v_h)_{L^2(\Omega)} + a^{\text{upw}}_h(u_h, v_h) + J^0_h(u_h, v_h) + l_h(v_h) = 0 \quad \forall \: v_h \in \mathcal{V}^0_h(\mathcal{M}_h).
\end{equation}
Let $E_{\text{cut}}$ be a small cut-cell that requires stabilization. The idea behind $J_h^{0, E_{\text{cut}}}$ is the following: When the time step size is not chosen to respect the small size of $E_{\text{cut}}$, the domain of dependence of an outflow neighbor $E$ of $E_{\text{cut}}$ will extend beyond $E_{\text{cut}}$. We therefore extend the numerical DoD of $E$ such that it receives information directly from the inflow neighbors of $E^{\text{cut}}$. The amount of mass passed directly between neighbors of $E_{\text{cut}}$ is chosen such that the update on $E_{\text{cut}}$ becomes stable.

To explain the main concept we consider the scalar linear transport equation $u_t + \nabla \cdot (\beta u) = 0$ with constant velocity $\beta\added{=(\beta_1,\beta_2)^T} \in \mathbb{R}^2$, i.e., we use $A=\beta_1$, $B=\beta_2$ in \eqref{eq:hyper-system}. 
Consider Fig. \ref{fig:-dom-of-dep-ext} where the domain of dependence of $E_2$ potentially reaches into $E_1$ and $E_4$. 
In that case, for choosing the time step based on the size of background cells, mass (physically) is moved from $E_1$ and from $E_4$ to $E_2$ in a single time step and this coupling must be mimicked by the stabilization.

We introduce an extension operator 
$
\mathcal{L}_{E'}^{\text{ext}}(u_h)(x) = u^{E'}_h(x)
$
for $E' \in \mathcal{M}_h$ which acts on functions $u_h \in \mathcal{V}^0_h(\mathcal{M}_h)$ and corresponds to evaluating 
the (constant) polynomial of cell $E'$ outside its original support.
Using this operator, the following stabilization term was introduced in \cite{emns} for triangular cut-cells with single outflow face $F_2$ and single inflow face $F_1$:
\begin{equation}
\label{eq:2facestab}
J^{0, E_{\text{cut}}}_h(u_h, v_h) = \eta_{E_{\text{cut}}} \int_{F_2} \langle \beta, n_{F_2} \rangle (\mathcal{L}_{E_1}^{\text{ext}}(u_h) - u_h^{\added{E_{\text{cut}}}}) \left \llbracket{v_h}\right\rrbracket ds.
\end{equation}
This term introduces a direct coupling between $E_1$ and $E_2$. The parameter $\eta_{E_{\text{cut}}} \in [0, 1]$ controls how much mass is transported via this coupling. In the case of exactly one inflow face $F_1$ and one outflow face $F_2$ (and one face with no-penetration b.c.), as considered in \cite{emns}, this term suffices to ensure stability. Here, however, the cut-cell is allowed to have two inflow faces as illustrated in Fig. \ref{fig:-dom-of-dep-ext}. Furthermore, these inflow faces will in general influence multiple neighbors of the cut-cell where the degree of influence depends on the geometry and flow direction. To handle this we propose the following extension of \eqref{eq:2facestab}
\begin{equation}
J^{0, E_{\text{cut}}}_h(u_h, v_h) = \eta_{E_{\text{cut}}} \sum_{F_j \in \mathcal{F}_h^{E_{\text{cut}}}} \int_{F_j} \sum_{F_i \in \mathcal{F}_h^{E_{\text{cut}}}} \omega_i \langle \beta, n_{F_j} \rangle^+ ( \mathcal{L}_{E_i}^{\text{ext}}(u_h) - u_h^{\added{E_{\text{cut}}}}) \left \llbracket{v_h}\right\rrbracket ds.
\end{equation}
Here the $\omega_i \in \mathbb{R}$ provide information about the flow distribution for incoming flow of the face $F_i$. Note that the extended solutions of all neighbor elements are evaluated on all faces, and $\omega_i
= 0$ if $F_i$ is not an inflow face. We provide a specific formula of how to choose these weights for triangular cut-cells below in \added{section \ref{sec:parameters}}\deleteX{\eqref{eq: def omega}}.

Going back to the system case \deleteX{(albeit still with the restriction of $AB = BA$) }we allow $\omega_i \in \mathbb{R}^{m \times m}$ and arrive at our final formulation
\begin{equation} \label{eq:final-stabilization-term}
J^{0, E_{\text{cut}}}_h(u_h, v_h) = \eta_{E_{\text{cut}}} \sum_{F_j \in \mathcal{F}_h^{E_{\text{cut}}}} \int_{F_j} \sum_{F_i \in \mathcal{F}_h^{E_{\text{cut}}}} \langle \omega_i C^+_F (\mathcal{L}_{E_i}^{\text{ext}}(u_h) - u_h^{\added{E_{\text{cut}}}}) , \left \llbracket{v_h}\right\rrbracket \rangle ds.
\end{equation}
{Note that in defining $J^{0, E_{\text{cut}}}$ we assume that all normal vectors $n_j$ for $F_j \in \mathcal{F}_h^{E_{\text{cut}}}$ correspond to \textit{outward} normal vectors with respect to $E_{\text{cut}}$.}
In order to ensure consistency and stability the weights $\omega_i$ must fulfill
\begin{align}
\label{eq:flux-normalization} \sum_{F_i \in \mathcal{F}_h^{E_{\text{cut}}}} \omega_i = & \, \text{Id}_{m \times m}, \\
\label{eq:flux-distribution} \sum_{F_j \in \mathcal{F}_h^{E_{\text{cut}}}} \int_{F_j} \omega_i C^+_j ds = & - \int_{F_i} C^-_i ds \qquad \forall \: F_i \in \mathcal{F}_h^{E_{\text{cut}}}.
\end{align}
Additionally we require that $\omega_i C_{F_j}^+$ is always symmetric and positive semi-definite. Equation \eqref{eq:flux-normalization} can be understood as an assurance that the overall amount of mass moved over a face by our stabilization is correct. {For the scalar case, we require the $\omega_i$ to build a convex combination.} Equation \eqref{eq:flux-distribution} means that a portion of the inflow is exactly redistributed over all outflow face candidates. This in particular prevents overshoots on small cut-cells for appropriate choices of $\eta_{E_{\text{cut}}}$.

\subsection{$L^2$-stability}

Equipped with the aforementioned properties of our stabilization we can show $L^2$-stability for the semi-discrete scheme in space. For \deleteX{simplicity}\added{brevity} we \deleteX{will choose}\added{consider} homogeneous inflow boundary conditions and assume that \deleteX{our solution has compact support in $\Omega$ for all times in $(0, T)$}
\added{exact and discrete solution vanish on the boundary,}
so that we can ignore any domain boundary terms and focus on the situation of cut-cells. \added{For $P^0$ functions this means that they are zero in all boundary cells.}

\begin{theorem}
Consider \eqref{eq:hyper-system} with homogeneous boundary conditions. Assume that the \added{discrete} solution $u\added{_h}(t)$ \deleteX{has compact support in $\Omega$}\added{vanishes on the boundary $\partial\Omega$} for all $t \in (0, T)$. Let $u_h(t) \in \mathcal{V}^0_h(\mathcal{M}_h)$ be the solution to the semi-discrete problem \eqref{eq:semi-discrete-stabilization}. Then it holds
\begin{displaymath}
||u_{\added{h}}(t)||_{L^2(\Omega)} \leq ||u_{\added{h}}(0)||_{L^2(\Omega)} \qquad \forall \; t \in (0, T).
\end{displaymath}
\end{theorem}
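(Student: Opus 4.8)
The plan is to test the semi-discrete problem \eqref{eq:semi-discrete-stabilization} with $v_h = u_h(t)$. Because $g=0$ and $u_h$ vanishes on every boundary cell, the linear form $l_h$ and all exterior-face contributions drop out, so
\[
\tfrac12 \tfrac{d}{dt}\|u_h(t)\|_{L^2(\Omega)}^2 = (\partial_t u_h, u_h)_{L^2(\Omega)} = -\,a^{\text{upw}}_h(u_h,u_h) - J^0_h(u_h,u_h).
\]
It therefore suffices to prove the coercivity-type estimate $a^{\text{upw}}_h(u_h,u_h) + J^0_h(u_h,u_h) \ge 0$; the claimed bound then follows by integrating in $t$, since the norm is non-increasing.

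First I would treat the pure upwind form. Using that each $C_F$ is symmetric, the average--jump term on an interior face $F$ splits into $\tfrac12\big(\langle C_F u_h^{E_1}, u_h^{E_1}\rangle - \langle C_F u_h^{E_2}, u_h^{E_2}\rangle\big)$. Regrouping the face sum by elements with the outward-normal convention turns this into $\tfrac12\sum_E \big\langle \big(\sum_{F\in\mathcal{F}_h^E}\int_F C_{F,E}\,ds\big)u_h^E, u_h^E\big\rangle$, and since $\int_{\partial E} n\,ds = 0$ with $A,B$ constant, the element matrix $\sum_{F\in\mathcal{F}_h^E}\int_F C_{F,E}\,ds$ vanishes. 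Hence the indefinite part cancels up to exterior contributions that are zero by the boundary assumption, leaving the standard dissipation identity $a^{\text{upw}}_h(u_h,u_h) = \tfrac12\sum_{F\in\mathcal{F}^{\text{int}}_h}\int_F \langle |C_F|\,\llbracket u_h\rrbracket, \llbracket u_h\rrbracket\rangle\,ds \ge 0$, each $|C_F|$ being positive semi-definite.

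The main work --- and the expected obstacle --- is to show that the stabilization cannot exhaust this dissipation. I would localize to a single cut-cell $E_{\text{cut}}$, abbreviate $d_j := \llbracket u_h\rrbracket_{F_j} = u_h^{E_{\text{cut}}} - u_h^{E_j}$ (so that $\mathcal{L}_{E_i}^{\text{ext}}(u_h) - u_h^{E_{\text{cut}}} = -d_i$), and combine $J^{0,E_{\text{cut}}}_h$ from \eqref{eq:final-stabilization-term} with the upwind dissipation carried by the faces of $E_{\text{cut}}$. Writing $M_{ij} := \omega_i C_{F_j}^+$, which is symmetric and positive semi-definite by assumption, the matrix Young inequality $\langle M_{ij} d_i, d_j\rangle \le \tfrac12(\langle M_{ij} d_i, d_i\rangle + \langle M_{ij} d_j, d_j\rangle)$ controls the cross terms. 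The two structural conditions then close the estimate exactly: the normalization \eqref{eq:flux-normalization} gives $\sum_i M_{ij} = C_{F_j}^+$, while the flux-distribution identity \eqref{eq:flux-distribution} gives $\sum_j \int_{F_j} M_{ij}\,ds = -\int_{F_i} C_i^-\,ds$. Substituting both collapses the Young bound into exactly $\tfrac{\eta}{2}\sum_j\int_{F_j}\langle (C_{F_j}^+ - C_{F_j}^-)d_j, d_j\rangle = \tfrac{\eta}{2}\sum_j\int_{F_j}\langle |C_{F_j}|\,d_j, d_j\rangle$, i.e. half of the available cut-cell dissipation scaled by $\eta$. Thus the localized contribution is bounded below by $\tfrac{1-\eta_{E_{\text{cut}}}}{2}\sum_j\int_{F_j}\langle |C_{F_j}|\,d_j, d_j\rangle \ge 0$, using $\eta_{E_{\text{cut}}}\in[0,1]$.

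The remaining step is the global bookkeeping: faces adjacent to no stabilized cut-cell retain their full upwind dissipation, and provided stabilized cut-cells share no common face, no face's dissipation is borrowed twice, so the assembled sum stays non-negative and the estimate $a^{\text{upw}}_h(u_h,u_h)+J^0_h(u_h,u_h)\ge 0$ follows. I expect the delicate point to be precisely this interplay --- confirming that the positive-semidefiniteness requirement on $\omega_i C_{F_j}^+$ together with \eqref{eq:flux-normalization} and \eqref{eq:flux-distribution} is exactly what the Young step needs, and checking that the per-cut-cell dissipation is not over-committed when cut-cells lie geometrically close.
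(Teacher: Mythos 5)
Your proposal is correct and follows essentially the same route as the paper: test with $v_h = u_h$, reduce $a_h^{\text{upw}}(u_h,u_h)$ to the face dissipation via symmetry of $C_F$ and the per-element divergence identity, and then use \eqref{eq:flux-normalization} and \eqref{eq:flux-distribution} to bound $J_h^{0,E_{\text{cut}}}(u_h,u_h)$ from below by $-\tfrac{\eta_{E_{\text{cut}}}}{2}\sum_{j}\int_{F_j}\langle |C_{F_j}|\,d_j, d_j\rangle\,ds$ --- your matrix Young inequality is exactly the inequality form of the paper's completion-of-squares identity, which instead keeps the non-negative remainder $\sum_{i,j}\langle \omega_i C_{F_j}^+ (u^{E_i}-u^{E_j}), u^{E_i}-u^{E_j}\rangle$ explicit. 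Your closing caveat that stabilized cut-cells must not share a face (lest a face's upwind dissipation be borrowed twice) is a fair observation about a point the paper's write-up passes over silently.
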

\begin{proof}
We choose $v_h = u_h(t)$ in \eqref{eq:semi-discrete-stabilization}. Any boundary terms vanish \deleteX{due to compactness assumption on $u_h$}\added{ as $\operatorname{trace}(u_h(t)) = 0$ on $\partial\Omega$}. This yields
\begin{displaymath}
(\partial_t u_h(t), u_h(t))_{L^2(\Omega)} + a^{\text{upw}}_h(u_h(t), u_h(t)) + J_h^0(u_h(t), u_h(t)) = 0.
\end{displaymath}
By the fundamental theorem of calculus
\begin{displaymath}
\int_0^t (\partial_{\tau} u_h(\tau), u_h(\tau))_{L^2(\Omega)} d \tau = \int_0^t \frac{d}{d{\tau}} \frac{1}{2} ||u_h(\tau)||_{L^2(\Omega)}^2 d\tau = \frac{1}{2} ||u_h(t)||^2_{L^2(\Omega)} - \frac{1}{2} ||u_h(0)||^2_{L^2(\Omega)}.
\end{displaymath}
To ease notation we will write $u = u_h(t)$ in the following. The goal now is to show that $a^{\text{upw}}_h(u, u) + J_h^0(u, u) \geq 0$.
We first consider $a^{\text{upw}}_h(u, u)$, which expands into
\begin{align*}
a^{\text{upw}}_h(u, u) = & \sum_{F \in \mathcal{F}_h^{\text{int}}} \int_F \langle  C_F \{\!\{u\}\!\}, \llbracket u \rrbracket \rangle + \langle  \tfrac{1}{2}|C_F| \llbracket u \rrbracket, \llbracket u \rrbracket \rangle ds
= \sum_{F \in \mathcal{F}_h^{\text{int}}} \int_F \langle C_F^+ u^{E_1} + C_F^- u^{E_2}, u^{E_1} - u^{E_2} \rangle ds \\
= & \sum_{F \in \mathcal{F}_h^{\text{int}}} \int_F \langle C_F^+ u^{E_1}, u^{E_1} \rangle - \langle C_F^+ u^{E_1}, u^{E_2} \rangle + \langle C_F^- u^{E_2}, u^{E_1} \rangle - \langle C_F^- u^{E_2}, u^{E_2} \rangle ds.
\intertext
{Now we add zeros (in form of $\pm \tfrac 1 2 \langle C_F^- u^{E_1}, u^{E_1} \rangle$ and 
$\pm \tfrac 1 2 \langle C_F^+ u^{E_2}, u^{E_2} \rangle$) to get}
= & \sum_{F \in \mathcal{F}_h^{\text{int}}} \int_F \frac{1}{2} \langle (C_F^+ - C_F^-) u^{E_1}, u^{E_1} \rangle + \frac{1}{2} \langle (C_F^+ + C_F^-) u^{E_1}, u^{E_1} \rangle - \langle C_F^+ u^{E_1}, u^{E_2} \rangle \\ & \phantom{\sum_{F \in \mathcal{F}_h^{\text{int}}} \int_F} + \langle C_F^- u^{E_{2}}, u^{E_{1}} \rangle 
+ \frac{1}{2} \langle (C_F^+ - C_F^-) u^{E_2}, u^{E_2} \rangle - \frac{1}{2} \langle (C_F^+ + C_F^-) u^{E_2}, u^{E_2} \rangle ds \\
= & \sum_{F \in \mathcal{F}_h^{\text{int}}} \int_F \frac{1}{2} \langle |C_F|(u^{E_1} - u^{E_2}), u^{E_1} - u^{E_2} \rangle + \frac{1}{2} \langle C_F u^{E_1}, u^{E_1} \rangle - \frac{1}{2} \langle C_F u^{E_2}, u^{E_2} \rangle \\
& \phantom{\sum_{F \in \mathcal{F}_h^{\text{int}}} \int_F} - \frac 1 2 \langle C_F u^{E_1}, u^{E_2} \rangle + \frac 1 2  \langle C_F u^{E_2}, u^{E_1} \rangle ds.
\end{align*}
Due to the symmetry of $C_F$, the terms in the last line cancel each other. For the last two terms in the second to last line we use the divergence theorem. Since $u_h(t)$ is elementwise constant, on $E \in \mathcal{M}_h$ it holds that
\[
0 = \int_E \nabla \cdot (\langle A u\added{^E}, u\added{^E} \rangle, \langle B u\added{^E}, u\added{^E} \rangle) dx = \sum_{F \in \mathcal{F}^{\text{int}}_h \cup \mathcal{F}_h^{\text{ext}}, F \cap \partial E \neq \emptyset} \int_F \langle C_F u\added{^E}, u\added{^E} \rangle ds,
\]
and therefore, these terms vanish as well. Finally, due to $|C_F|$ being positive semi-definite, we obtain positivity of $a^{\text{upw}}_h(u, u)$:
\[
a^{\text{upw}}_h(u, u) = \sum_{F \in \mathcal{F}_h^{\text{int}}} \int_F \frac{1}{2} \langle |C_F|(u^{E_1} - u^{E_2}), u^{E_1} - u^{E_2} \rangle ds \geq 0.
\]
We now investigate $J^0_h$.
For a small cut-cell $E_{\text{cut}} \in \mathcal{I}$ we have due to \eqref{eq:flux-normalization}
\begin{align*}
J_h^{0, E_{\text{cut}}}(u, u)
= & \eta_{E_{\text{cut}}} \sum_{F_j \in \mathcal{F}_h^{E_{\text{cut}}}} \int_{F_j} \langle (\sum_{F_i \in \mathcal{F}_h^{E_{\text{cut}}}}  \omega_i C_{F_j}^+ u^{E_i}) - C_{F_j}^+ u^{E_{\text{cut}}}, u^{E_{\text{cut}}} - u^{E_j} \rangle ds \\
= & \eta_{E_{\text{cut}}} \sum_{F_j \in \mathcal{F}_h^{E_{\text{cut}}}} \int_{F_j} \langle \sum_{F_i \in \mathcal{F}_h^{E_{\text{cut}}}} \omega_i C_{F_j}^+ u^{E_i}, u^{E_{\text{cut}}} \rangle -  \langle \sum_{F_i \in \mathcal{F}_h^{E_{\text{cut}}}} \omega_i C_{F_j}^+ u^{E_i}, u^{E_j} \rangle \\ & \phantom{\eta_{E_{\text{cut}}} \int_{F_j}} - \langle C_{F_j}^+ u^{E_{\text{cut}}}, u^{E_{\text{cut}}}\rangle + \langle C_{F_j}^+ u^{E_{\text{cut}}}, u^{E_j} \rangle ds.\\
\intertext{{Adding again zeros (in form of $\pm \langle \sum_{F_i \in \mathcal{F}_h^{E_{\text{cut}}}} \omega_i C_{F_j}^+ u^{E_i}, u^{E_i} \rangle$ and $\pm \langle C_{F_j}^+ u^{E_j}, u^{E_j} \rangle$) and reordering gives}}
= & - \frac{1}{2} \eta_{E_{\text{cut}}} \sum_{F_j \in \mathcal{F}_h^{E_{\text{cut}}}} \int_{F_j} \langle C_{F_j}^+ u^{E_{\text{cut}}}, u^{E_{\text{cut}}} \rangle - 2\langle C_{F_j}^+ u^{E_{\text{cut}}}, u^{E_j} \rangle + \langle C_{F_j}^+ u^{E_j}, u^{E_j} \rangle \\ & \phantom{- \frac{1}{2} \int_{F_j}} + \langle \sum_{F_i \in \mathcal{F}_h^{E_{\text{cut}}}} \omega_i C_{F_j}^+ u^{E_i}, u^{E_i} \rangle - 2 \langle \sum_{F_i \in \mathcal{F}_h^{E_{\text{cut}}}} \omega_i C_{F_j}^+ u^{E_i}, u^{E_{\text{cut}}} \rangle + \langle C_{F_j}^+ u^{E_{\text{cut}}} , u^{E_{\text{cut}}} \rangle \\ & \phantom{- \frac{1}{2} \int_{F_j}} - \langle \sum_{F_i \in \mathcal{F}_h^{E_{\text{cut}}}} \omega_i C_{F_j}^+ u^{E_i}, u^{E_i} \rangle + 2 \langle \sum_{F_i \in \mathcal{F}_h^{E_{\text{cut}}}} \omega_i C_{F_j}^+ u^{E_i}, u^{E_j} \rangle - \langle C_{F_j}^+ u^{E_j} , u^{E_j} \rangle ds \\
= & - \frac{1}{2} \eta_{E_{\text{cut}}} \sum_{F_j \in \mathcal{F}_h^{E_{\text{cut}}}} \int_{F_j} \langle C_{F_j}^+(u^{E_{\text{cut}}} - u^{E_j}), u^{E_{\text{cut}}} - u^{E_j} \rangle \\ & \phantom{- \frac{1}{2} \int_{F_j}} + \sum_{F_i \in \mathcal{F}_h^{E_{\text{cut}}}} \langle \omega_i C_{F_j}^+ (u^{E_i} - u^{E_{\text{cut}}}), u^{E_i} - u^{E_{\text{cut}}} \rangle \qquad \text{($C_{F_j}^+, \omega_i C_{F_j}^+$ symm. and \eqref{eq:flux-normalization})}
\\ & \phantom{- \frac{1}{2} \int_{F_j}} - \sum_{F_i \in \mathcal{F}_h^{E_{\text{cut}}}} \langle \omega_i C_{F_j}^+ (u^{E_i} - u^{E_j}), u^{E_i} - u^{E_j} \rangle ds\\
\overset{\eqref{eq:flux-distribution}}{=}& -\frac{1}{2} \eta_{E_{\text{cut}}} \hspace{-1ex}\sum_{F_j \in \mathcal{F}_h^{E_{\text{cut}}}} \int_{F_j} \langle |C_{F_j}|(u^{E_{\text{cut}}} - u^{E_j}), u^{E_{\text{cut}}} - u^{E_j} \rangle \\
& \phantom{- \frac{1}{2} \int_{F_j}}
- \hspace{-1ex}\sum_{F_i \in \mathcal{F}_h^{E_{\text{cut}}}} \langle \omega_i C_{F_j}^+ (u^{E_i} - u^{E_j}), u^{E_i} - u^{E_j} \rangle ds.
\end{align*}
Since $\eta_{E_{\text{cut}}} \in [0, 1]$ the first term inside the sum can be compensated with terms from $a_h^{\text{upw}}$. The second term is always non-negative since $\omega_i C_{F_j}^+$ is always positive semi-definite.
{Note that the second term corresponds to dissipation introduced by an extended jump.} This concludes the proof.
\end{proof}

\subsection{Choice of parameters} \label{sec:parameters}

To perform actual computations we need to select concrete $\omega_i$ in \eqref{eq:final-stabilization-term} that fulfill properties \eqref{eq:flux-normalization} and \eqref{eq:flux-distribution}. 
For the situation of linear simultaneously diagonalizable hyperbolic systems and triangular cut-cells we suggest \added{$\omega_i = |F_i| C_{F_i}^- (\sum_{F_k \in \mathcal{F}_h^{E_{\text{cut}}}} |F_k| C_{F_k}^-)^{-1}$ for each $F_i \in \mathcal{F}_h^{E_{\text{cut}}}$.}
For linear advection, this would result in $\omega_i=0$ for an outflow edge $F_i$ and $\omega_i$ corresponding to some sort of weighted proportion of the total inflow for an inflow edge $F_i$. 
%
We also need to set 
$\eta_{E_{\text{cut}}}$ for $E_{\text{cut}} \in \mathcal{I}$. A stable but not necessarily optimal choice is \added{$\eta_{E_{\text{cut}}} = ||\sum_{F \in \mathcal{F}_h^{E_{\text{cut}}}} \int_F \Lambda_F^- ds||_{\infty}$.}
\deleteX{
\begin{displaymath}
\eta_{E_{\text{cut}}} = ||\sum_{F \in \mathcal{F}_h^{E_{\text{cut}}}} \int_F \Lambda_F^- ds||_{\infty}.
\end{displaymath}
}

\section{Numerical results}
For the numerical tests we select angles $\gamma, \theta, \rho_1, \rho_2 \in [0, 2\pi)$ where $\gamma$ is the angle of the cut, see Fig. \ref{fig:cutted-square-geometry}, and set
\[
\Lambda_1 = \begin{pmatrix} \scriptsize \cos(\rho_1) & 0 \\ 0 & \cos(\rho_2) \end{pmatrix}, 
\; \Lambda_2 = \begin{pmatrix} \scriptsize
\sin(\rho_1) & 0 \\ 0 & \sin(\rho_2) \end{pmatrix},
\; O = \begin{pmatrix} \scriptsize
\cos(\theta) & -\sin(\theta) \\ \sin(\theta) & \cos(\theta) \end{pmatrix}.
\]
Then our system matrices will be given by $A = O\Lambda_1 O^T$ and $B = O\Lambda_2 O^T$.

The cut starts at $(x_0, 0) = (0.2001, 0)$. 
As initial conditions we choose
$
u_0(x) = O \begin{pmatrix} \scriptsize
\sin(2 \pi(x_1 \cos(\rho_1) + x_2 \sin(\deleteX{\rho1}\added{\rho_1}))) \\ \cos(2 \pi(x_1 \cos(\rho_2) + x_2 \sin(\deleteX{\rho2}\added{\rho_2}))) \end{pmatrix}
$.
Boundary conditions are given by the exact solution. Let $N$ denote the number of background cells in either coordinate direction.
We compute the time step size via
$
\Delta t = 0.4 \frac{h}{\max_{n \in \mathbb{S}^1}||n_1 \Lambda_1 + n_2 \Lambda_2||_{\infty}}
$, 
where $h = \frac{1}{N}$. The factor of $0.4$ allows that bigger cut-cells do not need to be stabilized. \added{We then choose $\mathcal{I} = \{E \in \mathcal{M}_h | \frac{|E|}{h^2} < 0.4 \}$ as the set of stabilized cut-cells.} The final time of our simulation is $T = 0.5$.

Our implementation is based on the DUNE framework (see \cite{dune1}, \cite{dune2}), in particular the dune-udg (see \cite{dune-udg}, \cite{dune-udg2}) and dune-pdelab modules. The local subtriangulations for the cut-cells are computed by the TPMC library (see \cite{tpmc}).

\begin{figure}
    \centering
    \includegraphics[scale=0.5]{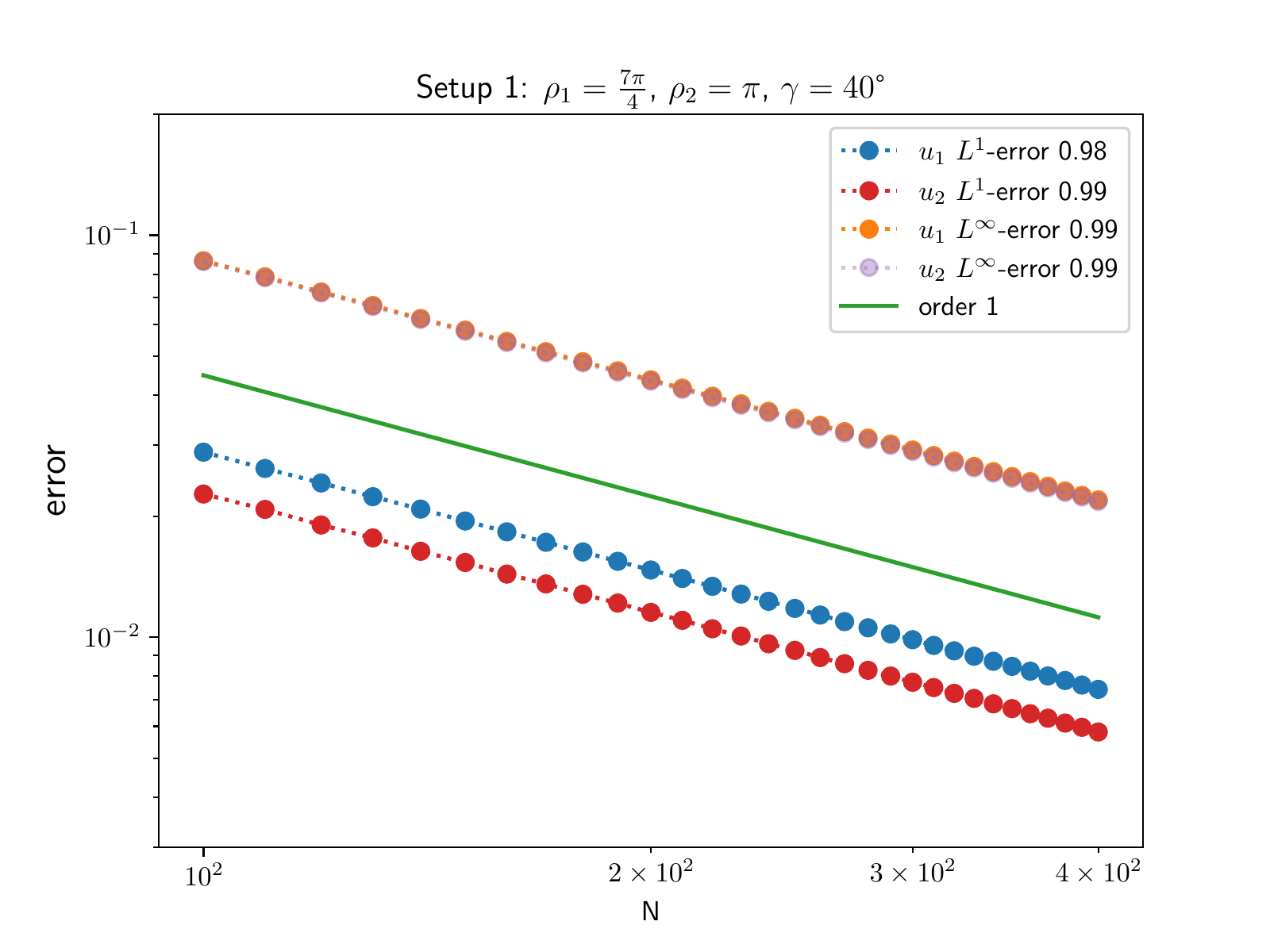}
    \includegraphics[scale=0.5]{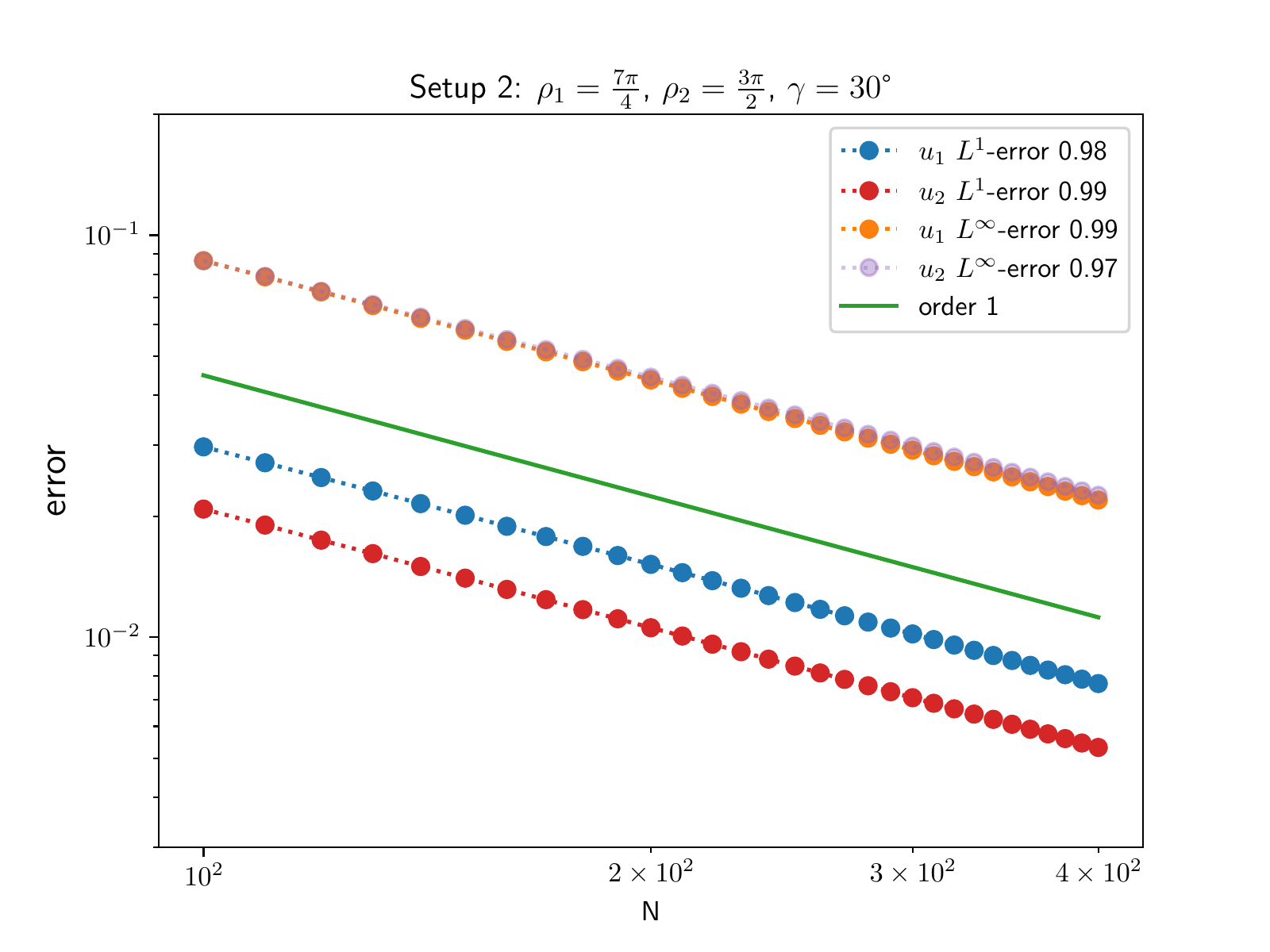}
    \caption{Plots of the error, measured in the $L^1$- (blue, red) and $L^{\infty}$-norm (orange, transparent purple). Left plot has values $\rho_1 = \frac{7 \pi}{4}$, $\rho_2 = \pi$ and $\gamma = 40$°, right plot has $\rho_1 = \frac{7 \pi}{4}$, $\rho_2 = \frac{3 \pi}{2}$ and $\gamma = 30$°. In all cases we have chosen $\theta = \frac{4\pi}{3}$. The green line is for reference.}
    \label{fig:numerical-results}
\end{figure}

Fig. \ref{fig:numerical-results} shows convergence plots for two particular setups. Note that the flow directions have been chosen to \textit{not} be parallel to the ramp angle $\gamma$. We observe the expected order of convergence for a first order scheme in both the $L^1$- and the $L^{\infty}$-norm. In addition, all numerically computed solution values, including those on small cut-cells, stayed within the bounds of the initial state during the simulation, confirming the added stability of the DoD stabilization.

\section{Discussion and Outlook}

We have extended the DoD stabilization to cut-cells with multiple inflow/outflow faces for the case of \added{component-wise and}
piecewise constant \deleteX{polynomials} \added{trial and test functions} and linear, simultaneously diagonalizable systems in two dimensions. We have proven $L^2$-stability for the semi-discrete setting. Numerically we observe full first-order convergence in different numerical tests and no over/undershoot on cut-cells. In future work, we plan to extend our method to more general systems, e.g., the acoustics and Euler equations. An extension of the presented formulation to higher-order approximations is ongoing research. 

\begin{acknowledgement}
  This work has been partially
  funded by the Deutsche Forschungsgemeinschaft (DFG, German Research Foundation) as project 439956613
  under contract numbers EN\,1042/5-1 and MA\,7773/4-1/2.
\end{acknowledgement}

\vspace{\baselineskip}


\begin{thebibliography}{1}

\bibitem{dune2}
 P.~Bastian, M.~Blatt, A.~Dedner, C.~Engwer, R.~Klöfkorn, R.~Kornhuber, M.~Ohlberger, and O.~Sander,
 Computing, \textbf{82}, 121-138 (2008)

\bibitem{dune1}
 P.~Bastian, M.~Blatt, A.~Dedner, C.~Engwer, R.~Klöfkorn, M.~Ohlberger, and O.~Sander,
 Computing, \textbf{82}, 103-119 (2008)
 
\bibitem{dune-udg}
 P.~Bastian and C.~Engwer,
 Int. Jour. for Num. Meth. in Eng., \textbf{79, 12}, 1557-1576 (2009)

\bibitem{emns}%
 C.~Engwer, S.~May, A.~Nüßing, and
  F.~Streitbürger,
 SIAM J. Sci. Comput. \textbf{42, 6}, A3677-A3703 (2020).
 
\bibitem{dune-udg2}
 C.~Engwer and F.~Heimann,
 Proceedings of the DUNE user meeting, Stuttgart, Germany, Advances in DUNE (Springer Berlin, Heidelberg, 2012) pp. 89-100
 
 \bibitem{tpmc}
  C.~Engwer and A.~Nüßing,
  ACM Trans. on Math. Soft., \textbf{44, 2}, Art. No. 14 (2018)
  
  \bibitem{Pei_Kreiss}%
  P.~Fu and G.~Kreiss, SIAM J. Sci. Comput., \textbf{43,4}, A2404–A2424, 2021.
  

 \bibitem{giuliani}
 A.~Giuliani,
 SIAM J. Sci. Comput. \textbf{44, 1}, A389-A415 (2022)
 
 \bibitem{ms}%
 S.~May, F.~Streitbürger,
 Appl. Math. Comput. \textbf{419}, Art. 126854 (2022).
 
 \bibitem{kronbichler}
 S.~Schoeder, S.~Sticko, G.~Kreiss and M. Kronbichler,
 Int. J. Numer. Meth. Engrg. \textbf{121, 13}, 2979-3003 (2020)
 
   \bibitem{ICOSAHOM}%
 F.~Streitbürger, G.~Birke, C.~Engwer, and S.~May, 
 Spectral and High Order Methods for Partial Differential Equations ICOSAHOM 2020+1., \textbf{137} (Springer International Publishing, 2023).

\end{thebibliography}
\end{document}               